\newtheorem{thm}{Theorem}[section]
\newtheorem{lemma}{Lemma}[section]
\newtheorem{prop}{Proposition}[section]
\newtheorem{defn}{Definition}[section]
\newtheorem{corr}{Corollary}[section]
\newtheorem{exam}{Example}[section]
\newtheorem{rmrk}{Remark}[section]
\newtheorem*{Proof*}{Proof}
\begin{document}
\baselineskip=16pt

\setcounter{tocdepth}{1}

% Keywords command
\providecommand{\keywords}[1]
{
  \small
  \textbf{\textit{Keywords---}} #1
}

%%%%%%%%%%%%%%%%%%%%%%%%%%%%%%%%%%%%%%%%%%%%%%%%%%%%%%%%%%%%%%%%%%%%%%%%%%%%%%%%%%%%%%%%%%%%%%%%%%%%%

\title[]{Interval maps where every point is eventually fixed}

%%%%%%%%%%%%%%%%%%%%%%%%%%%%%%%%%%%%%%%%%%%%%%%%%%%%%%%%%%%%%%%%%%%%%%%%%%%%%%%%%%%%%%%%%%%%%%%%%%%%
\author[V. Kannan]{V. Kannan}
\address{SRM University - AP, Amaravati-522502, India.}
\email{kannan.v@srmap.edu.in}

\author[Pabitra Narayan Mandal]{Pabitra Narayan Mandal}
\address{ School of Mathematics and Statistics, University of Hyderabad, Hyderabad 500046, India.}
\email{pabitranarayanm@gmail.com}

%\footnotetext{}
\footnotetext{2020 Mathematics Subject Classification. Primary 37E05; Secondary  26A18, 37E15, 37C25}

\begin{abstract}
Among the orbit patterns that force only eventually fixed trajectories, we completely describe the forcing relation, by answering the question: which orbit patterns force which others?
\end{abstract}
\keywords{Orbit pattern, Forcing relation, Interval map, Formal Language}
\maketitle
%%%%%%%%%%%%%%%%%%%%%%%%%%%%%%%%%%%%%%%%%%%%%%%%%%%%%%%%%%%%%%%%%%%%%%%%%%%%%%%%%%%%%%%%%%%%%%%%%%%%%%
\section{Introduction}
%In 1964, a celebrated theorem of Sarkovskii [] was published on cyclic orbit of real map. He proved that forcing relation on length of the cycles gives a total order on the set of natural numbers. Later its various proofs indicate a rich combinatorial structure controlling the disposition of the orbit themselves. In 1989, the general problem of understanding the structure (orbit pattern) was explicitly formulated by Baldwin [] on cyclic pattern. Since forcing relation is trivially transitive and reflexive, the only issue is anti symmetry. He showed that forcing relation on cyclic patterns is a partial order and in general it is not true. In 1992, Misiurewicz and Nitecki extend this study on the finite invariant set. on finite invariant set, they showed that the forcing relation is partially ordered if we consider notions such as  equivalent and essential on orbit pattern. The intertwining of several periodic orbits can yield richer information than is implicit in the orbits themselves, regarded separately. For instance, $(1,3)(2,5)(4,6)$ forces cycles of all orders. In 2018, Archana  gave a complete description in her thesis [] of those 2-patterns which do not force any other pattern of higher cardinality.
\subsection{History}
This paper is a sequel to \cite{shark}, \cite{bald}, \cite{misi} where the forcing relation among orbit-patterns is investigated. In \cite{shark}, it is done for cycle-lengths. In \cite{bald}, it is done for cyclic patterns. In \cite{misi}, it is done for some combinatorial patterns. In this paper, the same is done for eventually fixed patterns. 

This paper is also a sequel to \cite{sharkov}, \cite{kannan}, and \cite{pillai} where the orbit-patterns of some simple dynamical systems have been investigated. In those papers, the classes of interval maps studied were

i) those with the zero entropy

ii) those with only finitely many types of orbits

iii) those with only three or four non-ordinary points (and therefore with only 3 or 4 types of orbits).

Along similar lines, the class studied in this paper is $$\mathcal{EF}:=\{\text{Interval maps where every element is eventually fixed}\}$$ We describe how their orbit patterns are better understood through a formal language. There are uncountably many conjugacy classes but only countably many orbit-patterns for them.

This paper is also a sequel to \cite{devaney}, \cite{kannan1} and \cite{kannan2} where the formal languages increase the convenience in the study of dynamics of interval maps. In one of them \cite{devaney}, it is the language of itineraries of a unimodal map (an important particular case of piece wise monotone maps (for details see \cite{milnor})). In another  \cite{kannan1} it is the language of locating periodic points of period 1 and 2. In yet another \cite{kannan2}, it is an index set for the set of orbit-patterns.

\subsection{Statement of Main Result}
There is a natural bijection between the language $\{L,R\}^*$ and the set of all $\mathcal{EF}$ orbit patterns. The forcing relation on the latter set receives a neat description when framed in the terminology of theory of languages. We are able to find four rules of derivation in $\{L,R\}^*$ so that the following theorem becomes true: An orbit pattern $\alpha$ forces another orbit pattern $\beta$ if and only if the corresponding word of $\beta$ can be derived from that of $\alpha$ using four rules of derivation (described below). 
 
This result is more charming because these rules of derivation are not the ones that an expert could have guessed in the beginning. 

The four rules of derivation are

\begin{itemize}
\item Reduction of $LL$ to $L$
\item Reduction of $RR$ to $R$
\item Reduction of $LR$ or $RL$ to the empty word
\item Formation of a tail.
\end{itemize}

\subsection{Preliminaries}
An interval map means a continuous self map $f$ on an interval $I$. The $f$-trajectory of a point $x\in I$ is the sequence $(f^n(x))_{n=0}^{\infty}$, where $f^n = f \circ f \circ f \circ ... \circ f$ ($n$ times) and $f^0(x)=x$. This is also called as an $f$-orbit of $x$. We also call $f$-trajectory or $f$-orbit of $x$ as merely trajectory or orbit of $x$ when the function is understood. Strictly speaking, in the literature $I$ is commonly referred to as $[0,1]$. But one can always transfer the system to an interval where $0$ is an interior point with out changing the dynamics of the system.

Let $f$ be an interval map and $x_1, x_2,...,x_{n+1}$ be such that $f(x_j)=x_{j+1}$ where $j=1,2,$...$,n$ and $f(x_{n+1})=x_{n+1}$.  Label every term of $x_n$ with $L$ or $R$ according as it moves to its left or right. This labelling is done only upto the $n$-th term because $x_{n+1}$ is a fixed point. An empty word corresponds to the orbit of a fixed point. The word $w$ corresponding to $f$-orbit of $x$ is also called as orbit pattern tag of $x$.

\begin{defn}(Orbit Pattern)
Two real sequences $(a_n)_{n=0}^\infty$ and $(b_n)_{n=0}^\infty$ are said to be of the same order-pattern if $a_m < a_n$
$\iff$ $b_m < b_n$ holds for all $m, n\in \mathbb{N}_0$. An order-pattern of a sequence $(f^n(x))_{n=0}^\infty$ in a real
dynamical system $(\mathbb{R}, f)$ or $(I, f)$ is called an orbit-pattern.
\end{defn}  

The orbit-pattern provides the information:
For each pair $(m,n)$ of non-negative integers, which is smaller between the two numbers $f^m(x)$ and $f^n(x)$? Moreover it has been proved \cite{sharkov}, \cite{kannan2} that any $\mathcal{EF}$ orbit pattern is determined by the word representing it. In other words two different orbit patterns will never have the same tag.

If a word $w$ represents the orbit pattern of an element $x$, then the size of the orbit of $x$ $=$ length of the word $+$ $1$ $=$ the time taken by $x$ to reach the fixed point. Throughout this paper we use $u$, $u'$, $v$, $v'$, $w$, $w'$, $w''$ as words over $\{L,R\}$ and $|w|$ to denote the length of $w$.  

\begin{defn}(Forced Word)
We say a word $u$ is forced by $w$, if every continuous interval map that admits the orbit pattern tag $w$ has to admit the orbit pattern tag $u$ also. 
\end{defn}

Since any interval map always contains a fixed point, any word forces empty word. We use the notation ``$w \to u$" to mean that $w$ forces $u$ where $w$ and $u$ are two words over $\{L,R\}$.

\begin{defn}(Derived Word)
We say a word $u$ is derivable from $w$ if it is obtained by successive use of the following rules (finite number of times in any order):

$1$. $vRRv'$ $\implies$ $vRv'$.

$1'$. $vLLv'$ $\implies$ $vLv'$.

$2$. $vLRv'$ $\implies$ $vv'$.

$2'$. $vRLv'$ $\implies$ $vv'$.

$3$. $vv'$ $\implies$ $v'$.
\end{defn}

Rules $1$, $1'$, $2$, $2'$ are called as rules of reduction and the rule 3 is called tail formation. We use the notation ``$\implies$" for a process of derivation.

\begin{defn}(Constructed Word) \label{cons}
We say a word $u$ is constructed from $w$ if $u\in \mathcal{L}_w$ where $\mathcal{L}_w$ is the language corresponding to $w$ and it is defined recursively as follows: 

If $|w|\leq 2$, then $\mathcal{L}_w$ consists of only tails of $w$.

If $|w|>2$ then we have

\[
    \mathcal{L}_w := 
    \begin{cases}
          \mathcal{L}_{Rw'}\cup L(\mathcal{L}^R_{Rw'}) & \textit{when}\; w=LRw', \\
          \mathcal{L}_{LLw'}\cup L(\mathcal{L}^L_{LLw'}) & \textit{when}\; w=LLLw', \\
          \mathcal{L}_{LRw'}\cup L(\mathcal{L}_{LRw'}) & \textit{when}\; w=LLRw',
    \end{cases}
\]
or dually when $w$ starts with $R$. Here $\mathcal{L}^L_{w'}$ means the set of all those words in $\mathcal{L}_{w'}$ that start with $L$ and similarly for the superfix $R$. Moreover $L(\mathcal{L}_{w'})$ is the set of all words $Lu$ where  $u\in \mathcal{L}_{w'}$.
\end{defn}

We use ``$<a,b>$" for a closed interval with end points $a$ and $b$. For two intervals $I$ and $J$, we write ``$I\xrightarrow{f} J$" if $f(I)\supset J$. We use the abbreviation IVT for Intermediate Value Theorem.

In the next three sections we prove our main result by proving three implications of the following triangular diagram. 
\[
\begin{tikzcd}
{} & w\implies u \arrow{dr} \\
 u\in \mathcal{L}_w \arrow{ur}  &&  \arrow{ll} w\to u
\end{tikzcd}
\]

\section{If $u$ is derivable from $w$ then $u$ is forced by $w$}

\begin{thm}\label{tail}
If $w$ is an orbit pattern tag in $\mathcal{EF}$, then

1. $w$ always forces its own tails. 

2. length of any orbit pattern forced by $w$ is $\leq |w|$.

%3. if $0<j<n$ be the length of forced orbit pattern by $w$, then there exists $j$ blocks of $w$ where $m$-th block consists of $m$-th tag of the forced orbit pattern where $0<m\leq j$. In other words, tag of any forced orbit pattern can be represented by finite concatenation of sub words with out changing the order and position.  
\end{thm}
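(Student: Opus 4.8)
The plan is to prove the two assertions by different mechanisms: the first by relocating the base point along the orbit, the second by a model‑construction argument. Throughout write $n:=|w|$ and let $p$ denote the terminal fixed point.

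For part~1 I would argue that a tail is realized by a shifted base point. Suppose $f$ admits the tag $w$, witnessed by $x$ with trajectory $x=x_1,x_2,\dots,x_{n+1}=p$, and write $w=vv'$ with $|v|=i$. Consider $x_{i+1}=f^{i}(x)$, whose trajectory is $x_{i+1},x_{i+2},\dots,x_{n+1}$. The label attached to each $x_j$ (namely $L$ or $R$ according as $x_{j+1}<x_j$ or $x_{j+1}>x_j$) depends only on the consecutive pair $(x_j,x_{j+1})$, not on where reading of the orbit begins; hence the move‑word read off the trajectory of $x_{i+1}$ is exactly the suffix $v'$. Since this subtrajectory again terminates at the fixed point $p$, it is an $\mathcal{EF}$ orbit, and by the cited fact that a tag determines its orbit pattern (\cite{sharkov},\cite{kannan2}) the point $x_{i+1}$ realizes the orbit pattern coded by $v'$. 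Thus $f$ admits $v'$, and as $f$ was an arbitrary map admitting $w$ we conclude $w\to v'$ for every tail $v'$ of $w$.

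For part~2 the governing remark is that a forced pattern is one common to \emph{every} map admitting $w$, so it suffices to prove that no fixed pattern $u$ with $|u|>n$ is common to all of them; equivalently, for each such $u$ I must exhibit one continuous map admitting $w$ whose eventually‑fixed orbits never realize the order pattern $u$. A naive hope — a single model in which every eventually‑fixed point has tag length $\le n$ — cannot always succeed: the range $f(I)$ is a nondegenerate interval containing the convex hull of $\{x_2,\dots,x_{n+1}\}$, so whenever the deepest point $x_1$ is interior to the orbit's hull the range is all of $[m,M]$, every point acquires a preimage, and infinite backward orbits produce eventually‑fixed points of arbitrarily large tag length. The plan therefore splits along whether $x_1$ is extreme in the order pattern of $w$. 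When $x_1$ is an endpoint of the hull one may place it at an extreme of $I$ and choose slopes so that $f(I)$ is a proper subinterval missing $x_1$; then $x_1$ has empty preimage, the preimage tree of $p$ terminates at depth $n$, and this single shallow piecewise‑linear model (with the dots interpolated by IVT to realize $w$) admits no tag of length exceeding $n$ at all.

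The genuinely hard case, and the main obstacle, is when $x_1$ is interior, for then long eventually‑fixed orbits are unavoidable in any map admitting $w$ and the bound must be obtained pattern‑by‑pattern. Here I would build, for the prescribed target $u$, a low‑slope piecewise‑linear model: realize the finite $w$‑orbit exactly (fixing its order pattern by IVT), and off that orbit choose the linear pieces with small slopes so that the unavoidable backward orbits form a controlled monotone cascade whose tags all share a predictable shape, e.g.\ of the form ``fixed letter repeated, followed by a tail of $w$''. The task is then to tune this cascade so that its entire (infinite) family of long tags provably omits the single prescribed order pattern $u$, while leaving the finite $w$‑orbit and its order pattern undisturbed. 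Localizing the destruction of one long pattern without perturbing $w$, and doing so uniformly for every $u$ of length greater than $n$, is the delicate step on which the whole length bound rests.
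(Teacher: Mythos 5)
Your part~1 is correct and is exactly the paper's argument: the tail of length $|w|-j$ is the tag of $x_{j+1}$. The problem is part~2, where your proof has a genuine gap: the ``genuinely hard case'' on which you end --- $x_1$ interior to the orbit's hull, to be handled by a pattern-by-pattern tuning whose ``delicate step'' you explicitly leave unresolved --- is never actually proved, so the length bound is not established. More importantly, that case is vacuous, and the fact you are missing is the structural property of $\mathcal{EF}$ orbits invoked in the paper (from \cite{sharkov}): for an eventually fixed trajectory of an interval map, every term $x_j$ has \emph{all} of its successors $x_{j+1},\dots,x_{n+1}$ on one side of it. In particular $x_1$ lies outside the convex hull of $\{x_2,\dots,x_{n+1}\}$, so $x_1$ is always an endpoint of $[m,M]$ and your dichotomy collapses to the ``easy'' branch. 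With this fact at \emph{every} index, the connect-the-dots map satisfies $f^k([m,M])=\langle x_{k+1},\dots,x_{n+1}\rangle$ for each $k$, hence $f^n([m,M])=\{x_{n+1}\}$: a single shallow model in which no tag exceeds length $n$, which is all that part~2 needs.

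A secondary weakness, even within your easy branch: from ``$x_1$ has empty preimage'' you infer ``the preimage tree of $p$ terminates at depth $n$,'' but that does not follow --- the tree could have depth-$(n+1)$ branches through points other than $x_1$. Ruling those out is precisely why one needs the one-sidedness of successors at every index $j$ (so that each iterated image shrinks to the hull of $\{x_{j+1},\dots,x_{n+1}\}$), not merely the extremality of $x_1$ in the full hull. Once you import the cited fact, both defects disappear and your construction becomes the paper's proof.
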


\begin{proof}
(Proof of 1): If $w$ is the orbit pattern tag of $x_1$, then the tail of length $|w|-j$ is the orbit pattern tag of $x_{j+1}$ where $1\leq j< n$.

(Proof of 2): Let $(x_j)$ be an $\mathcal{EF}$ orbit of length $n+1$ where $x_{n+1}$ is a fixed point. Therefore $x_i$ is on one side of $x_j$ for all $1\leq j<i\leq n+1$ (By \cite{sharkov}). Consider a map $f$ such that it is linear on $<x_i,x_j>$ whenever there is no other term in this interval. (In other words $(x_i,f(x_i))$ and $(x_j,f(x_j))$ are joined by a line segment in the graph of $f$.)  In this $f$, we have $f([m,M])\subset [m,M]$ where $m:=min\{x_1,x_2,...,x_n,x_{n+1}\}$ and $M:=max\{x_1,x_2,...,x_n,x_{n+1}\}$ and $f^n([m,M])=x_{n+1}$. Therefore the length of the forced orbit pattern by $w$ is at most $|w|$.

\end{proof}
  
\begin{rmrk}  
There are two more observations for $\mathcal{EF}$ orbit patterns which are important to note, namely

1. Only forced orbit pattern by $w$ of length $|w|$ is $w$ itself.

2. The forcing relation becomes a partial order on $\mathcal{EF}$ orbit patterns. However this result is not true for general orbit patterns. For instances, see \cite{misi}, \cite{kannan}.
\end{rmrk}

%\begin{rmrk}
%It is easy to see that one can construct examples such that all such inclusion become equality. Therefore this eventually shows that there does not exist more forcing orbits other than these. In this sense it exhaust all such forcing orbit pattern. 
%\end{rmrk}

\begin{thm}\label{reduction}
Among $\mathcal{EF}$ orbit patterns,

$1$. $uRRw \to uRw$

$1'$. $uLLw \to uLw$

$2$. $uLRw \to uw$

$2'$. $uRLw \to uw$
\end{thm}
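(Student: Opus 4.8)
The plan is to derive each of the four forcing relations from a chain of covering relations, checking every link with the IVT and then reading the reduced tag off the sides of the orbit it produces. Fix a map $f$ realizing $w$ and an orbit $x_1,\dots,x_{n+1}$ with tag $w$ whose terminal fixed point is $q:=x_{n+1}$; write $p:=|u|$. Two facts carry all the bookkeeping. First, the monotonicity property from \cite{sharkov} recalled in Theorem \ref{tail}: the $j$-th letter is $R$ exactly when $x_j<q$ and $L$ exactly when $x_j>q$, and every later term lies strictly on that same side of $x_j$. Second, since $f(x_j)=x_{j+1}$ and $f(q)=q$, the IVT gives the basic covering $<x_j,q>\xrightarrow{f}<x_{j+1},q>$ for every $j$. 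Composing these along the prefix $u$ and along the suffix $w$ produces two flanking chains, $<x_1,q>\to\cdots\to<x_p,q>$ and $<x_{p+3},q>\to\cdots\to<x_n,q>\to\{q\}$, whose intervals already sit on the sides prescribed by $u$ and by $w$.

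The heart of the argument is one skip lemma: \emph{if an orbit term $x_m$ lies in $<x_i,q>$ then $<x_i,q>\xrightarrow{f}<x_{m+1},q>$}, because $f(<x_i,q>)$ is an interval containing $q=f(q)$ and $x_{m+1}=f(x_m)$, hence all of $<x_{m+1},q>$. For rules $1$ and $1'$ the two equal consecutive letters force $x_{p+2}$ to lie strictly between $x_{p+1}$ and $q$ (same side, nearer to $q$, by the monotonicity fact), so the lemma supplies the single missing link $<x_{p+1},q>\xrightarrow{f}<x_{p+3},q>$; inserting it after the first flanking chain run one step further to $<x_{p+1},q>$ gives a valid chain for $uRw$. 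For rules $2$ and $2'$ I need instead the longer jump $<x_p,q>\xrightarrow{f}<x_{p+3},q>$. Here $f(<x_p,q>)$ is an interval through $q$ that also contains $x_{p+2}=f(x_{p+1})$ when the last letter of $u$ is $L$ (then $x_{p+1}\in<x_p,q>$), and contains $x_{p+3}=f(x_{p+2})$ when it is $R$ (then $x_{p+2}\in<x_p,q>$); a short check with the monotonicity fact shows $x_{p+3}$ lies in this image interval in both cases, so $<x_{p+3},q>\subset f(<x_p,q>)$. The primed rules are the left--right mirror images of the unprimed ones.

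Finally I would feed the spliced chain $<x_1,q>\to\cdots\to<x_p,q>\to<x_{p+3},q>\to\cdots\to\{q\}$ into the standard nested-interval/IVT realization argument (the same device used in Theorem \ref{tail}) to obtain a point $z$ whose trajectory threads the listed intervals and lands on $q$. The main obstacle is not the existence of $z$ but \emph{strictness}: I must guarantee that $z$ first meets $q$ only at the final step and that every earlier iterate lies \emph{strictly} on its prescribed side of $q$, so that the trajectory is genuinely $\mathcal{EF}$, of the correct length, with side-sequence exactly $uw$ (resp.\ $uRw$) and not some shorter pattern obtained by collapsing onto $q$ early. Securing this requires choosing interior points at each link and discarding premature hits. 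Once strictness holds, the monotonicity fact converts the side-sequence into the tag $uw$ (resp.\ $uRw$); since an $\mathcal{EF}$ tag determines its orbit pattern, $f$ admits that pattern, and as $f$ was an arbitrary map realizing $w$ this is precisely the asserted forcing.
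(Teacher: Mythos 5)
Your proposal is correct and is essentially the paper's own argument: realizing your spliced covering chain by backward IVT pullback anchored at the actual orbit point $x_{p+3}$ (so every preimage is automatically a non‑fixed point strictly on the prescribed side of $q$, and the tail of the new orbit is literally the tail of the old one) is exactly the paper's construction of the points $b_1,\dots,b_k$, and your case split on the last letter of $u$ for rules $2$ and $2'$ is the paper's Claim‑I/Claim‑II dichotomy. The one step you defer --- strictness of the realization --- is resolved precisely by that anchoring, so there is no genuine gap.
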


\begin{proof}
Proof of (1): Let $u$ be a word of length $k$. Let $(a_n)$ be an $f$-trajectory represented by $uRRw$. Assume that this orbit converges to $0$, then $a_{k+1}<a_{k+2}<0$. Then $f(0)>a_{k+2}$ and $f(a_k)<a_{k+2}$. By IVT, $\exists \; b_k\in <a_k,0>$ such that $f(b_k)=a_{k+2}$. If $a_k<0$ then $-b_k>0$ and $a_k-b_k<0$; if $a_k>0$ then $-b_k<0$ and $a_k-b_k>0$. Therefore by IVT, $\exists \; b_{k-1}\in <a_{k-1},0>$ such that $f(b_{k-1})=b_{k}$ where $b_{k-1}$ is of same sign as of $a_{k-1}$. In this way we get $b_j\in <a_j,0>$ where $j=1,2,...,k-1$ such that $f(b_j)=b_{j+1}$ and sign of $b_j$ is same as of $a_j$. Therefore the orbit pattern tag of $b_1$ is $uRw$.

Proof of (1$'$): Similar as (1).

\noindent \textbf{Claim-I}: $uRRLw \to uRw$ and $uLLRw \to uLw$.

Let $u$ be a word of length $k$. Let $(a_n)$ be an $f$-trajectory represented by $uRRLw$. Assume that this orbit converges to $0$, then $a_{k+1}<a_{k+2}<0<a_{k+3}$. Now $f(a_{k+1})<a_{k+4}$ and $f(a_{k+2})>a_{k+4}$. By IVT, $\exists \; b_{k+1}\in (a_{k+1},a_{k+2})$ such that $f(b_{k+1})=a_{k+4}$ and corresponding label of $b_{k+1}$ is $R$. Now $f(a_k)<b_{k+1}$ and $f(0)>b_{k+1}$. Therefore by IVT, $\exists \; b_k\in <a_k,0>$ such that $f(b_k)=b_{k+1}$. This shows that sign of $b_k$ and $a_k$ are same and hence the corresponding label. In this way we can find $b_j\in <a_j,0>$ where $j=1,2,...,k$ such that $f(b_j)=b_{j+1}$ where $j=1,2,...,k$ and $f(b_{k+1})=a_{k+4}$ and sign of $b_j$ is same as that of $a_j$. Therefore the orbit pattern of $b_1$ is $uRw$.

Similarly for $uLLRw \to uLw$.

\noindent \textbf{Claim-II}: $uLRLw \to uLw$ and $uRLRw \to uRw$.

Let $u$ be a word of length $k$. Let $(a_n)$ be an $f$-trajectory represented by $uLRLw$. Assume that this orbit converges to $0$. Then $a_{k+2}<0<a_{k+3}<a_{k+1}$. Now $f(a_k)>a_{k+3}$ and $f(0)<a_{k+3}$. By IVT, $\exists \; b_k\in <a_k,0>$ such that $f(b_k)=a_{k+3}$. By similar method as earlier we find $b_1$, $b_2$,..., $b_{k-1}$ such that $f(b_j)=b_{j+1}$ where $b_j\in <a_j,0>$, $j=1,2,...,k-1$. Therefore the orbit pattern tag of $b_1$ is $uLw$.

Similarly for $uRLRw \to uRw$.

Proof of (2): If $u$ is an empty word then $uLRw$ forces $uw$ as $uLRw=LRw$ and $uw=w$ and we know by Theorem \ref{tail} every tail is forced by their orbit always. If $w$ is an empty word, then $uLR$ is same as either $u'LLR$ or $u'RLR$ for some $u'$ where $u=u'L$ or $u'R$. Now $u'LLR$ forces $u'L$, i.e., $u$ (by Claim-I) and $u'RLR$ forces $u'R$, i.e., $u$ (by Claim-II). Hence if $w$ is empty then in either way $uLR$ forces $u$.

Assume that both $u$ and $w$ are non-empty. Then $uLRw$ is either $u'LLRw$ or $u'RLRw$ where $u=u'L$ or $u'R$. By Claim-I, $u'LLRw$ forces $u'Lw$, i.e., $uw$; by Claim-II, $u'RLRw$ forces $u'Rw$, i.e., $uw$. 

Proof of (2$'$): Similarly as (2).
\end{proof}

\begin{corr}
If $w$ is an $\mathcal{EF}$ orbit pattern tag and $u$ is derivable from $w$, then $u$ is forced by $w$.
\end{corr}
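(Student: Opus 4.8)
The plan is to reduce the corollary entirely to the two results already in hand, Theorem~\ref{tail} and Theorem~\ref{reduction}, by exploiting the transitivity of the forcing relation. The first thing I would record is that $\to$ is transitive: if $w\to u$ and $u\to v$, then by the definition of a forced word every interval map admitting the tag $w$ admits $u$, and every map admitting $u$ admits $v$; composing these implications shows every map admitting $w$ admits $v$, so $w\to v$. Reflexivity $w\to w$ is immediate, so in fact $\to$ is a preorder (and, as the Remark notes, a partial order).

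Next I would observe that each of the five derivation rules is, by itself, a single forcing step that has already been proved. Rules $1$, $1'$, $2$, $2'$ are verbatim the four assertions of Theorem~\ref{reduction}, after renaming the neutral variables $v,v'$ to $u,w$: one application $vRRv'\implies vRv'$ yields $vRRv'\to vRv'$, and likewise for the other three reductions. The tail-formation rule $3$, $vv'\implies v'$, produces the suffix $v'$ of $vv'$, which is precisely a tail of $vv'$ in the sense of the Preliminaries; hence part~$1$ of Theorem~\ref{tail} gives $vv'\to v'$. Thus every admissible single rule application $w'\implies w''$ satisfies $w'\to w''$.

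Finally I would conclude by induction on the number of rule applications. If $u$ is derivable from $w$, there is by definition a finite chain $w=w_0\implies w_1\implies\cdots\implies w_k=u$ in which each step applies one of the five rules. By the previous paragraph $w_i\to w_{i+1}$ for every $i$, and iterating transitivity gives $w=w_0\to w_k=u$, which is exactly the claim.

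I do not anticipate a genuine obstacle here, because the entire analytic content of the statement already lives in Theorems~\ref{tail} and~\ref{reduction}, where the intermediate-value constructions (especially Claims~I and~II) do the real work. The only points demanding a little care are bookkeeping: confirming that the derivation rules, stated with $v,v'$, coincide literally with the forcing statements stated with $u,w$, and that the word ``tail'' in Theorem~\ref{tail} means exactly the suffix produced by rule~$3$. Both identifications are immediate, so the corollary follows at once from transitivity plus the two theorems.
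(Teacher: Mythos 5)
Your proposal is correct and follows essentially the same route as the paper's own (very terse) proof: each single rule application is a forcing step by Theorem~\ref{tail} or Theorem~\ref{reduction}, and transitivity of the forcing relation, iterated along the derivation chain, yields the conclusion. Your version simply spells out the induction on the number of rule applications that the paper leaves implicit.
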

\begin{proof}
Forcing relation is transitive and every derived word is a combination of reductions and tail formation. By Theorem \ref{tail} and Theorem \ref{reduction}, every derived word is forced.
\end{proof}

\section{If $u$ is forced by $w$ then $u$ can be constructed for $w$}

Let $\mathcal{L}_w$ be the language constructed for $w$, now we describe $\mathcal{L}_{Lw}$ using the following rule in terms of $\mathcal{L}_w$:

1. If $w$ starts with $LL$, then $\mathcal{L}_{Lw}=\mathcal{L}_w\cup L(\mathcal{L}^L_w)$.

2. If $w$ starts with $LR$, then $\mathcal{L}_{Lw}=\mathcal{L}_w\cup L (\mathcal{L}_w)$.

3. If $w$ starts with $R$, then $\mathcal{L}_{Lw}=\mathcal{L}_w \cup L(\mathcal{L}^R_w)$. 

Similarly for $\mathcal{L}_{Rw}$.

\begin{thm}
If $u$ is forced by $w$ where $w$ be an $\mathcal{EF}$ orbit pattern, then $u\in \mathcal{L}_w$.
\end{thm}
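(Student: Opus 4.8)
The plan is to prove the contrapositive of the earlier diagram edge, establishing that the forced words of $w$ are contained in the constructively defined language $\mathcal{L}_w$. The natural strategy is induction on $|w|$, exploiting the recursive structure that already appears in Definition \ref{cons} and in the three-case splitting rule for $\mathcal{L}_{Lw}$ stated just before the theorem. The base cases $|w|\le 2$ are handled directly: by Theorem \ref{tail}, any forced word has length $\le |w|$, and for short words the only forced words are tails together with $w$ itself, which is exactly $\mathcal{L}_w$ in the base case. So the real content is the inductive step.

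For the inductive step I would fix $w$ with $|w|>2$, write $w=Lw'$ (the case $w=Rw'$ being dual), and suppose $u$ is forced by $w$. First I would dispose of the case that $u$ starts with $L$ by passing to a suitable sub-orbit: a realizing map $f$ of $w$ also realizes $w'$ (the tail after one step), so by the inductive hypothesis the part of $u$ after its initial $L$ should land in the appropriate sub-language, and prepending $L$ via the operators $L(\mathcal{L}^L_w)$, $L(\mathcal{L}_w)$, or $L(\mathcal{L}^R_w)$ — according to whether $w$ begins $LL$, $LR$, or $R$ — recovers $u\in\mathcal{L}_w$. The case that $u$ does not start with $L$ (including $u$ empty) should reduce to showing $u\in\mathcal{L}_{w'}$, i.e. that whatever $w$ forces is already forced by its first proper tail once the leading symbol is stripped; this is where the three-way case analysis on the first two letters of $w$ matters, because the recursion in Definition \ref{cons} changes shape depending on whether the leading block is $LR$, $LLL$, or $LLR$.

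The key technical device throughout is the realization direction: given that $u$ is \emph{forced} by $w$, I need to extract, from an arbitrary continuous map admitting the pattern $w$, enough structure to pin down the initial letter and the sub-pattern of $u$. Here I would lean on the IVT-based constructions used in the proof of Theorem \ref{reduction}, where from an orbit realizing one pattern one builds nearby points $b_j$ on the same side of the fixed point realizing a reduced pattern. The inductive hypothesis converts ``forced by $w'$'' into ``lies in $\mathcal{L}_{w'}$,'' and the prefix operators in the recursion then reassemble membership in $\mathcal{L}_w$. Matching the combinatorics of these prefix operators ($L(\mathcal{L}^L_w)$ versus $L(\mathcal{L}_w)$ versus $L(\mathcal{L}^R_w)$) against exactly which first letters a forced word can carry is the crux of the argument.

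The hard part will be proving that \emph{every} forced $u$ arises this way — that is, the completeness direction of the containment rather than soundness. Showing $\mathcal{L}_w$ is closed under forcing (each of its members is forced) would be comparatively routine given Theorem \ref{reduction}, but showing there is nothing forced \emph{outside} $\mathcal{L}_w$ requires ruling out, for each candidate $u\notin\mathcal{L}_w$, the existence of some realizing map of $w$ that avoids the pattern $u$. I expect the main obstacle to be constructing such a counterexample map uniformly, most likely via the piecewise-linear ``connect the dots'' map used in Theorem \ref{tail}, and verifying that its full set of realized patterns is precisely $\mathcal{L}_w$ and no larger; the delicate point is controlling which patterns appear at points other than the orbit itself, so that the induction's case split on the first two letters of $w$ genuinely exhausts all possibilities.
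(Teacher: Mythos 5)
Your overall strategy---induction on $|w|$, a case split on the first letters of $w$, and a minimal piecewise-linear model whose realized patterns are verified to be exactly $\mathcal{L}_w$---is the paper's strategy, and your last paragraph correctly locates the crux (controlling the patterns of points off the orbit itself). But two things in the middle would not survive being written out. First, the step where you ``dispose of the case that $u$ starts with $L$'' by applying the inductive hypothesis to the part of $u$ after its initial letter presupposes that if $Lu''$ is forced by $Lw'$ then $u''$ is forced by $w'$. Forcing is a universal statement over all realizing maps, and no such hereditary property is available (nor is it how the paper argues). The induction has to be carried on the statement ``the connect-the-dots model realizing $w$ admits no pattern outside $\mathcal{L}_w$,'' applied to the restriction of that model to the subinterval spanned by the orbit from $x_2$ onward---not on the forcing relation itself. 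Once that model is built, forced patterns lie in $\mathcal{L}_w$ simply because they must appear in it; your phrasing in terms of ``converting forced-by-$w'$ into membership in $\mathcal{L}_{w'}$'' never closes this loop.

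Second, and more substantively, you do not identify the geometric fact that makes the three-way recursion in Definition \ref{cons} come out right. In the model, any point carrying a new pattern must lie between $x_1$ and a specific later orbit point, and the image of that gap under the (linearly interpolated) map covers exactly one subinterval of the remaining orbit; this pins down both the first letter of the new pattern and the admissible first letter of its continuation. When $w$ begins $LL$ the gap is $[x_2,x_1]$ and the continuation must start with $L$ (whence the operator $L(\mathcal{L}^L_{\cdot})$); when $w$ begins $LR$ the relevant endpoint is not $x_2$ but $x_M=\max\{x_j:\ j\ge 2\}$, and a short case analysis on the sign of $f(x_M)$ shows the continuation must start with $R$ (whence $L(\mathcal{L}^R_{\cdot})$). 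Without this covering argument---in particular without the role of $x_M$---you cannot explain why the prefix operators are $L(\mathcal{L}^L)$, $L(\mathcal{L})$, $L(\mathcal{L}^R)$ in the three cases rather than $L(\mathcal{L})$ throughout, and that distinction is precisely the content of the theorem.
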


\begin{proof}
We will prove this by using the strong principle of induction on the length of words.

Initial Step: If $|w|=0$, then nothing to prove. If $|w|=1$, then $w=L$ or $R$. We construct a corresponding function $f$ (according to $w=L$ or $R$) on $I$. Without loss of generality we may assume that $0$ is an interior point of $I$. 

When $w=L$,

\[
    f(x) = 
    \begin{cases}
         0 & \text{when} \; x>0\\
        x & \text{when} \; x\leq 0
    \end{cases}
\]

When $w=R$,
\[
    f(x) = 
    \begin{cases}
         0 & \text{when} \; x<0\\
        x & \text{when} \; x\geq 0
    \end{cases}
\]

It is easy to see that for this function there is no orbit of length more than 1. Therefore it does not force other than its tail. Hence the result is true for $|w|=1$.

Induction Hypothesis: Let us assume that the result is true for the words of length upto $n$ where $n\geq 2$. 

Inductive Step: Let $v$ be a word of length $n+1$. Then $v=Lw$ or $Rw$ where $w$ be a word of length $n$. Then $v=$ $LLv'$ or $RRv'$ or $LRv'$ or $RLv'$, where $v'$ be a word of length $n-2$. Let $v$ be the orbit pattern tag of $x_1$, where $f(x_j)=x_{j+1}$  for $1\leq j\leq n + 1$ and $f(x_{n+2})=x_{n+2}$. Without loss of generality we may assume $x_{n+2}=0$.

\noindent Case 1: When $x_1>0$ and $x_2>0$, i.e., $v=LLv'$. 

If $x_3>0$, then $f([x_2,x_1])\supset [x_3,x_2]$ for any in interval map $f$. The additional orbit patterns available in this case are $L$ concatenated with the orbit pattern available in $[x_3,x_2]$ and nothing more because $f$ can be taken as linear on $[x_2,x_1]$ (so that $f([x_2,x_1])=[x_3,x_2]$). All the orbit patterns available in $[x_3,x_2]$ starts with $L$. So the possible additional orbit patterns available are $L$ concatenated with the orbit patterns (constructed for $Lv'$) starting with $L$. In this case, $\mathcal{L}_v=\mathcal{L}_{LLv'}=\mathcal{L}_{Lv'}\cup L(\mathcal{L}^L_{Lv'})$.

If $x_3<0$, then $f([x_2,x_1])\supset [x_3,x_2]$ for any interval map $f$. The additional orbit patterns available in this case are $L$ concatenated with the orbit pattern available in $Lv'$ and nothing more because $f$ can be taken as linear on $[x_2,x_1]$ (so that $f([x_2,x_1])=[x_3,x_2]$). Therefore $\mathcal{L}_v=\mathcal{L}_{LLv'}=\mathcal{L}_{Lv'}\cup L(\mathcal{L}_{Lv'})$.

\noindent Case 2:  When $x_1<0$ and $x_2<0$, i.e., $v=RRv'$. Similarly for Case 1.

%If $x_3<0$, then $f([x_1,x_2])\supset [x_2,x_3]$. The additional orbit patterns (which are not tail or a tail of a forced word of length $n-1$) available in this case are $R$ concatenated with the orbit pattern available in $[x_2,x_3]$.

%If $x_3>0$, then $f([x_1,x_2])\supset [x_2,x_3]$. The additional orbit patterns (which are not tail or tail of a forced word of length $n$) available in this case are $R$ concatenated with the orbit pattern available in $Rv'$.

\noindent Case 3: When $x_1>0$ and $x_2<0$, i.e., $v=LRv'$.

Take $x_M= max \{x_j:\: 2\leq j \leq n+2\}$. Now either $f(x_M)=0$ or $f(x_M)>0$ or $f(x_M)<0$.

If $f(x_M)<0$, then $f([x_M,x_1])\supset [x_2,x_{M+1}]$ for any interval map $f$. Therefore the additional orbit patterns available in this case are $L$ concatenated with the orbit patterns available in $[x_2,x_{M+1}]$ and nothing more because $f$ can be taken as linear on $[x_M,x_1]$ (so that $f([x_M,x_1])=[x_2,x_{M+1}]$). All the orbit patterns available in $[x_2,x_{M+1}]$ start with $R$. Therefore the possible additional orbit patterns available are $L$ connected with the orbit patterns (constructed for $Rv'$) starting with $R$. In this case, $\mathcal{L}_v=\mathcal{L}_{LRv'}=\mathcal{L}_{Rv'}\cup L(\mathcal{L}^R_{Rv'})$.

If $f(x_M)=0$, then $f(x_M,x_1)\supset [x_2,0]$ for any interval map $f$. Therefore the additional orbit patterns available in this case are $L$ concatenated with orbit patterns available in $[x_2,0]$ and nothing more because $f$ can be taken as linear on $[x_M,x_1]$ (so that $f([x_M,x_1])=[x_2,0]$). In this case also, $\mathcal{L}_v=\mathcal{L}_{LRv'}=\mathcal{L}_{Rv'}\cup L(\mathcal{L}^R_{Rv'})$.

If $f(x_M)>0$, then $f([x_{M}, x_1])\supset [x_2,x_{M+1}]$ for any interval map $f$. Therefore the additional orbit patterns available in this case are $L$ concatenated with the orbit patterns (constructed for $Rv'$) starting with $R$ and nothing more because $f$ can be taken as linear on $[x_M,x_1]$ (so that $f([x_M,x_1])=[x_2,x_{M+1}]$). In this case also, $\mathcal{L}_v=\mathcal{L}_{LRv'}=\mathcal{L}_{Rv'}\cup L(\mathcal{L}^R_{Rv'})$.

\noindent Case 4: When $x_1<0$ and $x_2>0$ i.e., $v=RLv'$. Similarly for Case 3.

%Take $x_m=min\{x_j: j \neq 1\}$ if it is negative otherwise $0$. Then either $f(x_m)=0$ or positive or negative.

%If $f(x_m)<0$, then $f([x_1,x_m])\supset [x_{m+1},x_2]$. Therefore the additional orbit patterns (which are not tail or tail of a forced word of length $n$) available in this case are $R$ concatenated with the orbit patterns available in $Lv'$.

%If $f(x_m)>0$, then $f([x_1,x_m])\supset [x_{m+1},x_2]$. Therefore the additional orbit patterns (which are not tail or tail of a forced word of length $n-1$) available in this case are $R$ concatenated with the orbit patterns available in $[x_{m+1},x_2]$.

%If $f(x_m)=0$, then $f([x_1,x_m])\supset [0,x_2]$. Therefore the additional orbit patterns  (which are not tail or tail of a forced word of length $n-1$) available in this case are $R$ concatenated with the orbit pattern available in $[0,x_2]$.

Hence the result.

\end{proof}

\section{If $u$ is constructed for $w$ then $u$ is derivable from $w$}
It is important to observe that one step reduction and one step tail formation always commute. Therefore every derived word from $w$ is a tail of a reduced word of $w$. Moreover if $u$ is derived from $w$, then there exists a derivation process in which tail formation is not there except in the last step. 

\begin{lemma}\label{inv}
If $w'$ is derived from $w$ by one step reduction only (without tail formation). Then we have:

i) If $w$ starts with $L$, then $w'$ also starts with $L$.

ii) If $w$ starts with $R$, then $w'$ also starts with $R$. 
\end{lemma}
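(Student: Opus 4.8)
The plan is to prove part (i) (the case where $w$ starts with $L$); part (ii) then follows by the symmetric argument interchanging $L$ and $R$. A one-step reduction is a single application of one of the rules $1,1',2,2'$, each of which rewrites a two-letter block ($RR$, $LL$, $LR$ or $RL$) sitting at some position of $w$. The whole argument will be a case analysis according to \emph{where} this block sits, organised around the clean dichotomy: either the block lies strictly to the right of the first letter, or it occupies the first two letters of $w$.

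First I would dispose of the \emph{internal} case, in which the rule is applied with a nonempty prefix $v$, i.e. $w=v\,Xv'$ with $X\in\{RR,LL,LR,RL\}$ and $|v|\ge 1$. Each of the four rules leaves this prefix $v$ untouched, so $w'$ again begins with $v$; in particular $w$ and $w'$ have the same first letter. Since $w$ starts with $L$, so does $w'$. This settles every reduction except those acting on the first two symbols.

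It then remains to treat the \emph{initial} case $v$ empty. Since $w$ starts with $L$, the block $X$ is either $LL$ or $LR$ (rules $1$ and $2'$ begin with $R$ and cannot apply here). Rule $1'$ sends $LLv'$ to $Lv'$, which still starts with $L$, as required. The only remaining possibility is rule $2$ acting at the start, $LRv'\implies v'$; but such a step merely deletes the initial block $LR$, and deleting a prefix is exactly tail formation (rule $3$ with discarded prefix $LR$). This is precisely how the empty-prefix case of rule $2$ is treated in the proof of Theorem \ref{reduction}, where $LRw\to w$ is justified by Theorem \ref{tail} as a tail. Since the hypothesis of the lemma explicitly excludes tail formation, this move is not among those under consideration. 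Hence in every admissible reduction the leading letter is preserved, which proves (i); (ii) is dual.

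The one genuinely delicate point — and where I expect the argument to require care — is exactly this classification of $LRv'\implies v'$ (and $RLv'\implies v'$) as tail formation rather than as a bona fide reduction. It must be read this way, for otherwise the statement would fail on words such as $LRR$, whose rule-$2$ image $R$ starts with $R$; the resolution is that stripping an initial $LR$ only removes a prefix and so belongs to rule $3$. Every \emph{non}-tail reduction either retains a nonempty prefix or replaces an initial $LL$ or $RR$ by $L$ or $R$, and in all of these the first letter is unchanged. I would make sure to state this dichotomy explicitly so that the exclusion of the prefix-deleting instances of rules $2$ and $2'$ is transparent.
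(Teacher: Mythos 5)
Your proof is correct and follows essentially the same route as the paper's: reductions applied strictly inside the suffix leave the leading letter untouched, and the only dangerous move --- deleting an initial $LR$ (or $RL$) --- is classified as tail formation and therefore excluded by hypothesis, which is exactly the paper's Case-II (and, as you rightly observe, this reclassification is forced, since otherwise $LRR\implies R$ would refute the statement). The only, entirely harmless, difference is that the paper also discards the initial $LL\to L$ (resp.\ $RR\to R$) step as a tail formation, whereas you keep it and verify directly that it preserves the first letter.
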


\begin{proof}
In the first case, let us assume $w=Lw''$ for some $w''$.

\noindent Case-I: If there is a reduction on $w''$, then $w'$ starts with $L$.

\noindent Case-II: If there is a reduction on first two letters of $w$, then the derived word is also obtained by tail formation also. This is not allowable by hypothesis. Hence Case-II will not arise for one step reduction only.

Hence the result.

%If there is a reduction first two letters, then either $w''=Lw'''$ or $Rw'''$ i.e., $w=LLw'''$ or $LRw'''$. By one step reduction on first two letters, we get $LLw'''\implies Lw'''$ or $LRw'''\implies w'''$. But $w'''$ being a tail of $w$, this is not possible (as it contradicts to our hypothesis). Therefore $w'=Lw'''$, i.e., $w'$ starts with $L$.
\end{proof}

\begin{rmrk}\label{process}
By Lemma \ref{inv}, we get that one can keep the first letter throughout the reduction process i.e., till the previous step to tail formation. 
\end{rmrk}

\begin{lemma}\label{caseL}
If $u$ is a tail of $w$, where $w$ starts with $L$, then $Lu$ is reduced from $Lw$. And dually if $u$ is a tail of $w$, where $w$ starts with $R$, then $Ru$ is reduced from $Rw$ 
\end{lemma}
\begin{proof}
Say $w=Lw'$ for some $w'$.

\noindent Case I: If $u=w$, then nothing to prove.

\noindent Case II: If $u\neq w$, then $w=Lw''u$ for some $w''$. Applying repeatedly four rules of reduction on $Lw''u$, we may get $LRu$ or $Lu$. Now applying once again the rule of reduction on $LLRu$ or on $LLu$ we get $Lu$. 

Therefore, if $u$ is a tail of $w$ and $w$ starts with $L$, then $Lu$ can be reduced from $Lw$.
\end{proof}

\begin{lemma}\label{caseR}
If $Ru$ is a tail of $w$ and $w$ starts with $R$, then $LRu$ is reduced from $Lw$.
\end{lemma}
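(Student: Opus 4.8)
The plan is to prove a slightly stronger statement that is better suited to induction, namely: for \emph{every} word $v$ (not necessarily starting with $R$) and every tail of $v$ of the form $Ru$, one has $Lv \Rightarrow LRu$ using reduction rules alone. Lemma \ref{caseR} is then the special case $v = w$, since $w$ starts with $R$. I would argue by strong induction on $|v|$. Two situations serve as the anchor and are immediate: if $Ru = v$ is the whole word (which forces $v$ to begin with $R$), then $Lv = L\cdot Ru = LRu$ and nothing needs to be reduced; and if $|v| = 1$ the only relevant possibility is $v = R$ with $Ru = R$, already covered (if $v = L$ there is no tail beginning with $R$ and the claim is vacuous).

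In the inductive step I assume $Ru$ is a \emph{proper} tail of $v$, so $|v| \geq 2$; write $v = a v'$ with $a \in \{L,R\}$ and $v'$ nonempty, and observe that $Ru$ is then a tail of $v'$. The idea is to peel a single reduction off the front of $Lv$ so as to land on $Lv^-$ for a strictly shorter $v^-$ that still has $Ru$ as a tail, and then invoke the induction hypothesis. Concretely: if $a = L$, apply $LL \to L$ (rule $1'$) to the first two letters of $Lv = LLv'$, reaching $Lv'$. If $a = R$, I inspect the first letter of $v'$: when it is $R$ I apply $RR \to R$ to the second and third letters of $Lv = LRRv''$, reaching $LRv'' = Lv'$; when it is $L$ I apply $RL \to \epsilon$ to the second and third letters of $Lv = LRLv''$, reaching $Lv''$. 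In every case the leading $L$ is untouched (consistent with Lemma \ref{inv} and Remark \ref{process}), the inner word $v'$ or $v''$ is strictly shorter than $v$, and $Ru$ remains one of its tails, so the induction hypothesis gives $Lv^- \Rightarrow LRu$ and hence $Lv \Rightarrow LRu$.

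The step that needs the most care is the cancellation $v = RLv''$, where $RL \to \epsilon$ deletes two letters at once: here I must check that $Ru$ genuinely survives as a tail of $v''$. This holds because $Ru$ is a proper tail of $v$ and therefore a tail of $v' = Lv''$, and since $Ru$ begins with $R$ while $v'$ begins with $L$, it cannot equal $v'$ and so must already be a tail of $v''$. The remaining bookkeeping---that a proper tail of $v = av'$ is exactly a tail of $v'$, and that each peeling move is a legitimate application of one of the reduction rules $1,1',2,2'$ (never tail formation)---is routine. I expect no obstacle beyond this tail-preservation check, and the argument mirrors the canonical-form reductions already carried out in Lemma \ref{caseL}.
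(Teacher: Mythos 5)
Your proof is correct. The underlying idea is the same as the paper's --- collapse the material sitting between the leading $L$ and the distinguished tail $Ru$ by reductions only --- but the mechanism is genuinely different. The paper writes $w=Rw'Ru$ and asserts, without detailed justification, that repeated reduction of $Rw'Ru$ lands on one of exactly two canonical forms, $Ru$ or $RLRu$, after which one prepends $L$ and possibly cancels one more $RL$. You instead set up a strong induction on $|v|$ for the stronger statement that $Lv\Rightarrow LRu$ whenever $Ru$ is any tail of $v$, peeling a single reduction ($LL\to L$, $RR\to R$, or $RL\to\epsilon$) off positions adjacent to the leading $L$ at each step. This buys you two things: you never need the canonical-form claim (which is the only unargued point in the paper's version), and you explicitly verify the one delicate point, namely that the two-letter cancellation in the case $v=RLv''$ does not destroy the tail $Ru$ --- which follows, as you note, because $Ru$ starts with $R$ and so cannot equal the tail $Lv''$. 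The paper's route is shorter on the page but leans on the reader to believe the reduction bookkeeping; yours is longer but self-contained and checks the only step that could plausibly fail.
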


\begin{proof}
If $w=Ru$, then nothing to prove as $LRu=Lw$ is a trivial reduction on $Lw$.

If $w\neq Ru$, then $w=Rw'Ru$ for some $w'$. Repeatedly applying four rules of reductions on $Rw'Ru$, we may get two possible words, namely $Ru$ or $RLRu$. Hence $LRu$ can be reduced either from $LRu$ by trivial reduction or from $LRLRu$ by one more reduction. Therefore $LRu$ is reduced from $Lw$.   
\end{proof}

\begin{prop}\label{propo}
If $Lu$ is derivable from $w$, where $w$ starts with $L$, then $LLu$ is derivable from $Lw$. And dually if $Ru$ is derivable from $w$, where $w$ starts with $R$, then $RRu$ is derivable from $Rw$.
\end{prop}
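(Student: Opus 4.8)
The plan is to deduce the Proposition from Lemma~\ref{caseL} after passing to a reduction-only normal form; I would prove the first assertion and obtain the second by the $L\leftrightarrow R$ symmetry.

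First I would record the structure of the given derivation. Since $Lu$ is derivable from $w$, the observations opening this section let me write $Lu$ as a tail of some word $\tilde{w}$ that is obtained from $w$ by reductions alone, with tail formation occurring only at the last step. Because $w$ starts with $L$, Remark~\ref{process} (equivalently Lemma~\ref{inv}) guarantees that $\tilde{w}$ also starts with $L$. Thus I would arrive at a word $\tilde{w}$ enjoying three properties: $w$ reduces to $\tilde{w}$ using reductions only, $\tilde{w}$ starts with $L$, and $Lu$ is a tail of $\tilde{w}$.

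Next I would lift this reduction to $Lw$. Each reduction rule acts on two adjacent letters, so the reduction sequence from $w$ to $\tilde{w}$ can be replayed verbatim inside $Lw$ with the prepended $L$ playing the role of a spectator inside the prefix $v$ of each rule; hence $Lw$ reduces to $L\tilde{w}$. It then remains to relate $L\tilde{w}$ to $LLu$. Since $\tilde{w}$ starts with $L$ and $Lu$ is a tail of $\tilde{w}$, Lemma~\ref{caseL}, applied with the tail taken to be $Lu$ (rather than $u$), yields that $L(Lu)=LLu$ is reduced from $L\tilde{w}$. Chaining the two reductions gives that $LLu$ is reduced from $Lw$, and in particular derivable from $Lw$, which is what is wanted.

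The step needing the most care is the bookkeeping of the leading letters: I must make sure $\tilde{w}$ genuinely begins with $L$ so that Lemma~\ref{caseL} applies, and that the object fed into that lemma as the tail is $Lu$ itself, so that the conclusion carries the extra $L$ I am after. The lifting step, though intuitively clear, also deserves an explicit sentence noting that no reduction crosses the boundary between the prepended $L$ and $w$, so the leading $L$ survives throughout. The degenerate situations, where $Lu=\tilde{w}$ or $u$ is empty, are absorbed by Case~I (respectively the trivial reduction) inside Lemma~\ref{caseL} and require no separate argument.
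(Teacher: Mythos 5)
Your proposal is correct and follows essentially the same route as the paper's own proof: pass to a reduction-only word $\tilde{w}$ (the paper's $w'$) starting with $L$ via Remark~\ref{process}, apply Lemma~\ref{caseL} with the tail taken to be $Lu$ to get $LLu$ reduced from $L\tilde{w}$, and chain with the lifted reduction $Lw \implies L\tilde{w}$. The only cosmetic difference is that you absorb the paper's separate Case~I (no tail formation) into the trivial-tail case of Lemma~\ref{caseL}, and you make explicit the spectator argument for why $Lw$ reduces to $L\tilde{w}$, which the paper leaves implicit.
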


\begin{proof}
Since $w\implies Lu$, there exists a derivation process such that possibly except the last step all the intermediate steps are reduction only.

\noindent Case I: If $w\implies Lu$, where all the intermediate steps are reduction, then $Lw\implies LLu$.

\noindent Case II: If not the Case I, then $\exists \; w'$ such that $w\implies w'$ (reduction steps) and $w' \implies Lu$ (tail formation step). By Remark \ref{process}, we may assume that $w'$ starts with $L$. Therefore $Lu$ is a tail of $w'$, where $w'$ starts with $L$. By Lemma \ref{caseL}, $LLu$ is reduced from $Lw'$ and $Lw'$ is reduced from $Lw$. Therefore $LLu$ is reduced and hence derived from $Lw$.
\end{proof}

\begin{thm}
If $u\in \mathcal{L}_w$, then $u$ can be derived from $w$ i.e., $u$ is a tail of a reduced word of $w$.
\end{thm}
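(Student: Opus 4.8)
The plan is to prove this final theorem by strong induction on the length of $w$, mirroring the recursive structure of the definition of $\mathcal{L}_w$ (Definition \ref{cons}) and exploiting the commutation remark together with Proposition \ref{propo}. The recursive cases in the definition of $\mathcal{L}_w$ are exactly where I expect the induction to bite: for $|w| \leq 2$, the language consists only of tails of $w$, and every tail is a reduced-then-tailed word of $w$ (indeed a pure tail of $w$), so the base case is immediate from Theorem \ref{tail} and the structural observation that each element of $\mathcal{L}_w$ is a tail of a reduced word.

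First I would set up the inductive step by writing $w$ in one of its leading forms. By the duality built into the definition it suffices to treat the cases where $w$ begins with $L$, namely $w = LRw'$, $w = LLLw'$, and $w = LLRw'$, and to show every $u \in \mathcal{L}_w$ is derivable from $w$. For each recursive clause, $\mathcal{L}_w$ is a union $\mathcal{L}_{w_0} \cup L(\mathcal{L}^\ast_{w_0})$ where $w_0$ is a shorter word (one of $Rw'$, $LLw'$, $LRw'$) and the superscript $\ast \in \{L, R, \emptyset\}$ restricts to words starting with the indicated letter. The first summand $\mathcal{L}_{w_0}$ is handled directly: by the induction hypothesis every $u \in \mathcal{L}_{w_0}$ is derivable from $w_0$, and since $w$ reduces to $w_0$ in one step (e.g. $LRw' \to Rw'$ via rule $2$ after noting the reduction $LLR \to LR$ is available, or $LLLw' \to LLw'$ via rule $1'$, or $LLRw' \to LRw'$ via rule $1'$), transitivity of derivation gives derivability of $u$ from $w$.

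The second summand is the substantive part, and this is where Proposition \ref{propo} does the work. An element of $L(\mathcal{L}^L_{w_0})$ has the form $Lu$ where $u \in \mathcal{L}_{w_0}$ starts with $L$; by induction $u$ is derivable from $w_0$, and since $u$ starts with $L$ so does $w_0$ (or the relevant reduced form), so Proposition \ref{propo} yields $Lu$ derivable from $Lw_0$, and then I reduce $w$ to $Lw_0$ to conclude. The case $L(\mathcal{L}^R_{Rw'})$ arising from $w = LRw'$ is the one requiring Lemma \ref{caseR} rather than Lemma \ref{caseL}: here $u = Ru''$ starts with $R$ while the prefix we prepend is $L$, so I would invoke the appropriate variant that produces $LRu''$ as a reduction of $L w_0$, again composing with $w \implies Lw_0$.

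I expect the main obstacle to be bookkeeping rather than a deep idea: I must verify in each of the three clauses that the shorter word $w_0$ fed to the induction hypothesis is genuinely reached from $w$ by the allowed reductions, and that the prefix-letter restrictions ($\mathcal{L}^L$ versus $\mathcal{L}^R$) match precisely the hypotheses of Proposition \ref{propo}, Lemma \ref{caseL}, and Lemma \ref{caseR}. The commutation observation at the start of this section — that tail formation can be deferred to the last step — is what licenses treating ``derivable'' as ``tail of a reduced word,'' and I would lean on Remark \ref{process} to keep the leading letter intact throughout the reduction phase so that the prefix $L$ can legitimately be prepended. Assembling these pieces, every $u \in \mathcal{L}_w$ is exhibited as derivable from $w$, completing the induction and thereby closing the third edge of the triangular diagram.
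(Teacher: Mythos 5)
Your proposal follows essentially the same route as the paper: strong induction on $|w|$ through the recursive clauses of Definition \ref{cons}, handling the first summand by the induction hypothesis plus transitivity (since the shorter word is a tail of $w$) and the second summand via Proposition \ref{propo}, Lemma \ref{caseL}, and Lemma \ref{caseR}, with Remark \ref{process} keeping the leading letter intact. The only slip is cosmetic: $Rw'$ is obtained from $LRw'$ by tail formation (it is a tail of $LRw'$), not by rule $2$, which would yield $w'$; since tail formation is an allowed derivation step, the argument is unaffected.
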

\begin{proof}
Without loss of generality, we may assume that $w$ starts with $L$. Say $w=Lw'$ for some $w'$. Assume that the result is true for all the words of length $< |w|$. By Definition \ref{cons}, we know 

1. If $w'$ starts with $LL$, then $\mathcal{L}_{Lw'}=\mathcal{L}_{w'}\cup L(\mathcal{L}^L_{w'})$.

2. If $w'$ starts with $LR$, then $\mathcal{L}_{Lw'}=\mathcal{L}_{w'}\cup L (\mathcal{L}_{w'})$.

3. If $w'$ starts with $R$, then $\mathcal{L}_{Lw'}=\mathcal{L}_{w'} \cup L(\mathcal{L}^R_{w'})$.

\noindent Case I: When $w'$ starts with $LL$. If $u\in \mathcal{L}_{w'}$, then by the induction hypothesis the result is true ($u$ is derivable from $w'$; and $w'$ is a tail of $w$. Then $u$ is derivable from $w$).

If $u\in L(\mathcal{L}^L_{w'})$, then $u=LLu'$ where $Lu'$ is derivable from $w'$. Since $w'$ starts with $L$, by Proposition \ref{propo}, $LLu'$ is derivable from $Lw'=w$.

\noindent Case II: When $w'$ starts with $LR$. If $u\in \mathcal{L}_{w'}$, then $u$ is derivable from $w$ (by same argument as above).

If $u\in L(\mathcal{L}_{w'})$, then $u=Lu'$ where $u'$ is derivable from $w'$. By Remark \ref{inv}, we may assume $\exists \; w''$ which starts with $L$ such that $w\implies w''$ (reduction steps) and $w'' \implies u'$ (tail formation step). By Lemma \ref{caseL}, $Lu'$ is reduced from $Lw''$. Observe that $Lw''$ is reduced from $Lw'=w$ also. Hence $u$ is derivable from $w$.

\noindent Case III: When $w'$ starts with $R$. If $u\in \mathcal{L}_{w'}$, then $u$ is derivable from $w$ (by same argument as above).

If $u\in L(\mathcal{L}^R_{w'})$, then $u=LRu'$ where $Ru'$ is derivable from $w'$ starting with $R$. Now by Remark \ref{process}, $\exists \; w''$ where $w''$ starts with $R$ such that  $w'\implies w''$ (reduction steps) and $w'' \implies Ru'$ (tail formation step).
Therefore $Ru'$ is a tail of $w''$ where $w''$ starts with $R$. By Lemma \ref{caseR}, $u=LRu'$ is derivable from $w$.  

\end{proof}

\section{An Illustration and Drawing}
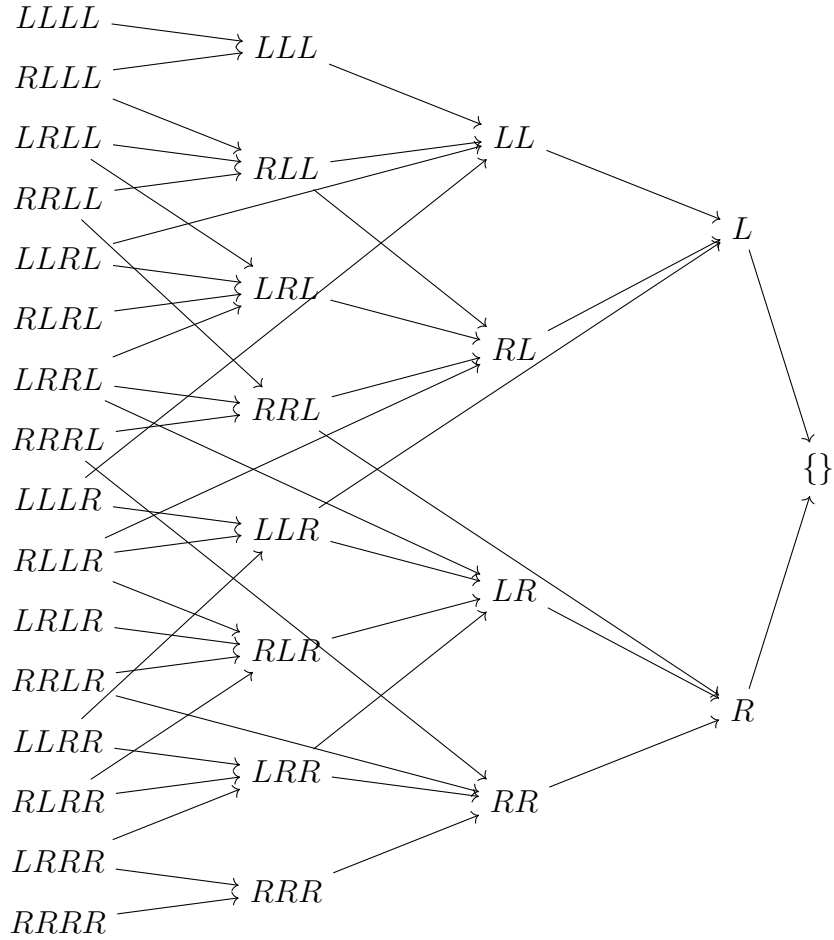
\begin{figure}[h]
	{\centering
		\begin{tikzpicture}
		\node (LLLL) at (0,11.8) {$LLLL$};
		\node (RLLL) at (0,11) {$RLLL$};
		\node (LRLL) at (0,10.2) {$LRLL$};
		\node (RRLL) at (0,9.4) {$RRLL$};
		\node (LLRL) at (0,8.6) {$LLRL$};
		\node (RLRL) at (0,7.8) {$RLRL$};
		\node (LRRL) at (0,7) {$LRRL$};
		\node (RRRL) at (0,6.2) {$RRRL$};
		\node (LLLR) at (0,5.4) {$LLLR$};
		\node (RLLR) at (0,4.6) {$RLLR$};
		\node (LRLR) at (0,3.8) {$LRLR$};
		\node (RRLR) at (0,3) {$RRLR$};
		\node (LLRR) at (0,2.2) {$LLRR$};
		\node (RLRR) at (0,1.4) {$RLRR$};
		\node (LRRR) at (0,0.6) {$LRRR$};
		\node (RRRR) at (0,-0.2) {$RRRR$};
		\node (LLL) at (3,11.4) {$LLL$};
		\node (RLL) at (3,9.8) {$RLL$};
		\node (LRL) at (3,8.2) {$LRL$};
		\node (RRL) at (3,6.6) {$RRL$};
		\node (LLR) at (3,5) {$LLR$};
		\node (RLR) at (3,3.4) {$RLR$};
		\node (LRR) at (3,1.8) {$LRR$};
		\node (RRR) at (3,0.2) {$RRR$};
		\node (LL) at (6,10.2) {$LL$};
		\node (RL) at (6,7.4) {$RL$};
		\node (LR) at (6,4.2) {$LR$};
		\node (RR) at (6,1.4) {$RR$};
		\node (L) at (9,9) {$L$};
		\node (R) at (9,2.6) {$R$};
		
		\node (E) at (10, 5.8) {$\{\}$};
		\draw[->] (L) -- (E);
		\draw[->] (R) -- (E);
		\draw[->] (LLLL) -- (LLL);
		\draw[->] (RLLL) -- (LLL);
		\draw[->] (RLLL) -- (RLL);
		\draw[->] (LRLL) -- (RLL);
		\draw[->] (LRLL) -- (LRL);
		\draw[->] (RRLL) -- (RLL);
		\draw[->] (RRLL) -- (RRL);
		\draw[->] (LLRL) -- (LL);
		\draw[->] (LLRL) -- (LRL);
		\draw[->] (RLRL) -- (LRL);
		\draw[->] (LRRL) -- (LRL);
		\draw[->] (LRRL) -- (RRL);
		\draw[->] (LRRL) -- (LR);
		\draw[->] (RRRL) -- (RRL);
		\draw[->] (RRRL) -- (RR);
		\draw[->] (LLLR) -- (LL);
		\draw[->] (LLLR) -- (LLR);
		\draw[->] (RLLR) -- (RL);
		\draw[->] (RLLR) -- (LLR);
		\draw[->] (RLLR) -- (RLR);
		\draw[->] (LRLR) -- (RLR);
		\draw[->] (RRLR) -- (RLR);
		\draw[->] (RRLR) -- (RR);
		\draw[->] (LLRR) -- (LLR);
		\draw[->] (LLRR) -- (LRR);
		\draw[->] (RLRR) -- (RLR);
		\draw[->] (RLRR) -- (LRR);
		\draw[->] (LRRR) -- (LRR);
		\draw[->] (LRRR) -- (RRR);
		\draw[->] (RRRR) -- (RRR);
		\draw[->] (LLL) -- (LL);
		\draw[->] (RLL) -- (LL);
		\draw[->] (RLL) -- (RL);
		\draw[->] (LRL) -- (RL);
		\draw[->] (RRL) -- (RL);
		\draw[->] (RRL) -- (R);
		\draw[->] (LLR) -- (L);
		\draw[->] (LLR) -- (LR);
		\draw[->] (RLR) -- (LR);
		\draw[->] (LRR) -- (LR);
		\draw[->] (LRR) -- (RR);
		\draw[->] (RRR) -- (RR);
		\draw[->] (LL) -- (L);
		\draw[->] (RL) -- (L);
		\draw[->] (LR) -- (R);
		\draw[->] (RR) -- (R);
		\end{tikzpicture}
		\caption{Hasse Diagram upto words of length 4}\label{Fig1}}
\end{figure}

\begin{exam}
Let $w=RLLRL$. Consider the corresponding piece wise linear map $f$ such that $-1 \xrightarrow{f} \frac{1}{2} \xrightarrow{f} \frac{1}{3} \xrightarrow{f} -\frac{1}{4} \xrightarrow{f} \frac{1}{5} \xrightarrow{f} 0$ where $0$ is a fixed point. Now we have

\[
    f(x) = 
    \begin{cases}
         \frac{1}{10}- \frac{2}{5} x & \; x\in [-1,-\frac{1}{4}],\\
        - \frac{4}{5} x & \; x\in [-\frac{1}{4},0],\\
        0 & \; x\in [0,\frac{1}{5}],\\
        \frac{3}{8} - \frac{15}{8} x & \; x\in [\frac{1}{5}, \frac{1}{3}],\\
        \frac{7}{2} x -\frac{17}{12} & \; x\in [\frac{1}{3}, \frac{1}{2}].
        
    \end{cases}
\]
\end{exam}

$[-1,-\frac{19}{21}) \xrightarrow{f} (\frac{97}{210}, \frac{1}{2}]\xrightarrow{f} (\frac{1}{5},\frac{1}{3}] \xrightarrow{f} [-\frac{1}{4},0) \xrightarrow{f} (0,\frac{1}{5}] \xrightarrow{f} 0.$

Here $L$, $RL$, $LRL$, $LLRL$, $RLLRL$ are available.

$[-\frac{19}{21}, -\frac{16}{21})\xrightarrow{f} (\frac{17}{42}, \frac{97}{210}]\xrightarrow{f} (0,\frac{1}{5}]\xrightarrow{f} 0.$

Here $L$, $LL$, $RLL$ are available

$[-\frac{16}{21},-\frac{1}{4})\xrightarrow{f} (\frac{1}{5}, \frac{17}{42}]\xrightarrow{f} [-\frac{1}{4},0]\xrightarrow{f} [0,\frac{1}{5}]\xrightarrow{f} 0.$

Here $L$, $RL$, $LRL$, $RLRL$ are available. Moreover, any interval map always contains (other than the empty word) the orbit patterns corresponding to $L$, $LL$, $RL$, $LRL$, $RLL$, $LLRL$, $RLRL$, $RLLRL$, whenever there is an orbit pattern tag $RLLRL$. And nothing more for this $f$. Hence the set of all forced orbit patterns (other than the fixed point) is $\{L,\: LL,\: RL,\: LRL,\: RLL,\: LLRL,\: RLRL,\: RLLRL\}$.

Now we will verify that same can be achieved by the set of construction rules. Using the construction rules, we have $\mathcal{L}_{RLLRL}=\mathcal{L}_{LLRL}\cup R(\mathcal{L}^L_{LLRL})$. Inductively, we have $\mathcal{L}_{LLRL}=\{LLRL,\: LRL,\: LL,\: L,\:RL\}$. So $\mathcal{L}_{RLLRL}=\{LLRL,\: LRL,\: RL,\: LL,\: L,\: RLLRL,\: RLRL,\: RLL\}$.

Again same can be achieved by using the set of derivation rules. Observe that $RLLRL \implies RLRL$ (using $LL\implies L$), $RLLRL\implies RLL$ (last $RL$ reduced to an empty word), $RLLRL \implies LLRL \implies LRL \implies RL \implies L$ and $RLL \implies LL$ (using tail formation). 

%\begin{rmrk}
%Here we discuss a complete list orbit patterns available for a map where fairly complex $w$ has been realized. Furthermore we see that it does not contain any extra orbit pattern which are not forced by $w$. 
%\end{rmrk}

\noindent \textbf{Concluding Remarks}: The question we are dealing in this paper is very fundamental to understand the system. But it is not known for general orbit patterns. We can say for general orbit pattern, forced orbit patterns can be very complex. For instance, we take an eventually fixed orbit pattern $a<b<c<d$ such that $b\xrightarrow{f} d \xrightarrow{f} a \xrightarrow{f} c$. One can show that it is not an $\mathcal{EF}$ orbit pattern; it forces a $6$-cycle and hence it forces uncountably many orbit patterns.

\section*{Acknowledgement(s)}
The second author acknowledges NBHM-DAE (Government of India) for financial
support (Ref. No. 2/39(2)/2016/NBHM/R \& D-II/11397). This work was done during the second author's visit to SRM University-AP.


\begin{thebibliography}{99}
    \bibitem{pillai} Ali Akbar, K., Kannan, V., Subramania Pillai, I., {\em Simple dynamical systems} Appl. Gen. Topol., 20, 307-324 2019.
        
    \bibitem{kannan1} Archana, M., Kannan, V., {\em A Contextfree language associated with interval maps}, Discrete Mathematics
and Theoretical Computer Science, 2016, Vol. 18 no. 3 (3).

    \bibitem{bald} Baldwin, S., {\em Generalizations of a theorem of Sarkovskii on orbits of continuous real-valued functions}, Discrete Mathematics, 67, 111-127, North-Holland, 1987.
    
    \bibitem{devaney} Devaney, R. L., {\em An Introduction to Chaotic Dynamical Systems}, Addison-Wesley, 1989.

    \bibitem{kannan} Kannan, V. and Mandal P. N., {\em Which orbit-types force only finitely many orbit-types?}, Journal of Difference Equations and Applications, DOI 10.1080/10236198.2020.1784152.
    
    \bibitem{kannan2} Kannan, V. and Mandal, P. N., {\em Universal Functions For The First Sarkovskii Class} (to appear)
    
    \bibitem{milnor} Milnor, John W., Thurston, William., {\em On iterated maps of the interval}, Dynamical systems, Lecture Notes in Mathematics, 1342, Berlin: Springer, pp. 465–563
    
    \bibitem{misi} Misiurewicz, M. and Nitecki, Z.,{\em Combinatorial Patterns for Maps of the Interval}, Memoirs of the American Mathematical Society, Volume 94, Number 456, November 1991.
    
    \bibitem{shark} Sharkovsky, A. N., {\em Co-existence of the cycles of a continuous mapping of the line into itself}, Ukrain. Mat. 2. 16, 61-71, 1964.
    
    \bibitem{sharkov} Sharkovsky, A. N., Kolyada, S. F., Sivak, A. G. and Fedorenko, V. V. (translated by Sivak, A. G., Malyshev, P. and Malyshev, D.) {\em Dynamies of One-Dimensional Maps}, Mathematics and Its Applications Volume 407, Springer Science+Business Media, B.V., 1997.
    
\end{thebibliography}
\end{document}